\title{On the Longest Common Pattern Contained in Two or More Random Permutations}
\author{Michael Earnest, University of Southern California \and Anant Godbole, East Tennessee State University \and  Yevgeniy Rudoy, The Johns Hopkins University}
\date{\today}
\newcommand{\E}{\mathbb E}
\newcommand{\p}{\mathbb P}
\newcommand{\floor}[1]{\lfloor #1 \rfloor}
\newcommand{\e}{\varepsilon}
\newcommand{\Var}{\text{Var}}
\renewcommand{\(}{\left(}
\renewcommand{\)}{\right)}
\renewcommand{\th}{^\text{th}}
\newtheorem{theorem}{\\Theorem}
\newtheorem{lemma}{\\Lemma}
\begin{document}

\maketitle
\begin{abstract}
We provide upper and lower bounds for the expected length $\E(L_{n,m})$ of the longest common pattern contained in $m$ random permutations of length $n$.  We also address the tightness of the concentration of $L_{n,m}$ around $\E(L_{n,m})$.
\end{abstract}

\section{Introduction}
Let $\pi=\pi_1\pi_2\ldots\pi_n$ and $\sigma=\sigma_1\sigma_2\ldots\sigma_k$ be permutations, where $k<n$. We say that $\sigma$ is a pattern \emph{contained in} $\pi$ if there are a series of entries in $\pi$, not necessarily consecutive, which have the same relative ordering as $\sigma$. For instance, when $\pi=153642$ and $\sigma=132$, we see that $\pi$ contains the subsequence 1,6,4, which has the same relative ordering as 1,3,2 in the sense that both have their smallest entry first, their largest second, and their middle last. Thus $\pi=153642$ contains the pattern $\sigma=132$.  Formally, we say $\sigma$ is a pattern contained in $\pi$ if there exist indices $i_1<i_2<\ldots < i_k$  such that the sequence $\pi_{i_1}\pi_{i_2}\ldots\pi_{i_k}$ is \emph{order isomorphic} to $\sigma$; i.e., for any $a$ and $b$, $\pi_{i_a}<\pi_{i_b}$ if and only if $\sigma_a<\sigma_b$.

Given $m$ permutations $\pi_1,\dots,\pi_m\in S_n$, a \emph{common pattern} is a permutation $\sigma$ which is a pattern contained in all of the $\pi_i$, and a \emph{longest common pattern} is a common pattern of maximum length.  Define $L_{n,m}$ to be the length of the longest common pattern (LCP) contained in $m$ uniformly randomly chosen permutations of length $n$. Our main results demonstrate that $\E(L_{n,m})\in\Theta(n^\frac{m}{2m-1})$ as $n\to\infty$, and we give asymptotic bounds for $\E(L_{n,m})$.  We also show that $L_{n,m}$ is concentrated in an interval of length $O(n^{\frac{m}{4m-2}})$ around $\E(L_{n,m})$.

The topics contained in this paper have obvious connections and similarities to two classical and well-studied problems, namely those of the longest common subsequence (LCS) $N_n$ between two random strings \cite{steele} and the longest monotone subsequence (LMS) of a random permutation \cite{ad}.   Here is a summary of key results in these areas:   

First, consider the LCS problem.  Given two independent, identically distributed binary strings $(X_1,\ldots,X_n)$ and $(Y_1,\ldots,Y_n)$, subadditivity arguments yield that 
\[\frac{\E(N_n)}{n}\to c\]
for some constant $c\in(0,1)$. The value of $c$ is not known to date (see \cite{steele}), and the best currently known bounds appear to be from \cite{lueker}, namely $0.7880\le c\le0.8296$. The situation where the variables take values from an alphabet $\{0,1,\ldots,d-1\}$  of size $d$ is similarly in an incompletely understood state, though techniques such as Azuma's and Talagrand's inequalities \cite{as} have been used to provide estimates of the width of concentration intervals of the LCS around its mean for all alphabet sizes.  The work of Kiwi, Loebl, and Matou\u sek \cite{klm} is of particular relevance to this paper.  They consider the case of large alphabet sizes and verify that the limiting constant $c_d$ in the alphabet $d$ LCS problem does indeed satisfy 
\[\lim_{d\to\infty}c_d\sqrt{d}=2,\]
as conjectured by Sankoff and Manville.

Moving to the LMS problem, we can do no better than to cite \cite{steele}, \cite{bdj}, and \cite{ad}, which takes us from the early years of the Erd\H os-Szekeres theorem (every permutation of $[n^2+1]$ contains a monotone sequence of length $n+1$), to the work of Logan-Shepp-Vershik-Kerov (namely that the longest monotone subsequence of a random permutation on $[n]$ is asymptotic to $2\sqrt{n}$), to concentration results (Janson, Kim, Frieze) that reveal that the standard deviation of the size of the LMS is of order $\Theta(n^{\frac{1}{6}})$, and culminating with the work of \cite{bdj} that exhibit the limiting law of a normalized version of the LMS.

Other forms of LMS problems have been considered in \cite{stanley} and \cite{albert}, and algorithmic results on the LCP problem that we study in the subsequent sections may be found in \cite{br} and \cite {brv}.

\section{Upper Bound}
\begin{theorem}
$\E(L_{n,m})\le\lceil en^\frac{m}{2m-1}\rceil.$
\end{theorem}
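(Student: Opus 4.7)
The plan is to apply the first-moment method to the random variable $Z_\ell$ counting the permutations $\sigma\in S_\ell$ that occur as a pattern in every one of the $m$ independent uniformly random permutations $\pi_1,\ldots,\pi_m\in S_n$.  Since containing a pattern of length $\ell$ forces the containment of patterns of every smaller length, the event $\{L_{n,m}\ge \ell\}$ coincides with $\{Z_\ell\ge 1\}$, and hence
\[\E(L_{n,m}) \;=\; \sum_{\ell\ge 1}\p(L_{n,m}\ge \ell) \;\le\; \sum_{\ell\ge 1}\min\bigl(1,\E Z_\ell\bigr);\]
it therefore suffices to bound the right-hand side by $\lceil en^{m/(2m-1)}\rceil$.

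For any fixed $\sigma\in S_\ell$, the expected number of occurrences of $\sigma$ in a single uniform $\pi\in S_n$ equals $\binom{n}{\ell}/\ell!$, since each of the $\binom{n}{\ell}$ index tuples $i_1<\cdots<i_\ell$ has relative order uniformly distributed on $S_\ell$.  Markov's inequality, together with the independence of the $\pi_i$, gives $\p(\sigma\text{ is common})\le (\binom{n}{\ell}/\ell!)^m$, and summing over the $\ell!$ choices of $\sigma$ yields
\[\E Z_\ell \;\le\; \binom{n}{\ell}^m/(\ell!)^{m-1}.\]
Inserting $\binom{n}{\ell}\le n^\ell/\ell!$ together with $\ell!\ge (\ell/e)^\ell$ simplifies this to
\[\E Z_\ell \;\le\; \(\frac{e^{2m-1}n^m}{\ell^{2m-1}}\)^{\ell},\]
which is at most $1$ precisely when $\ell\ge en^{m/(2m-1)}$ and decays super-exponentially past that threshold.

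Write $k=\lceil en^{m/(2m-1)}\rceil$.  The terms $\ell<k$ contribute at most $k-1$ to the earlier sum, so the remaining task is to show the tail $\sum_{\ell\ge k}\E Z_\ell<1$.  Substituting $\ell=k+j$ and using $(1+j/k)^{k+j}\approx e^j$, this tail behaves like a geometric series $\sum_{j\ge 0}e^{-(2m-1)j}$, a constant slightly greater than $1$.  To push it strictly under $1$ I would refine $\ell!\ge (\ell/e)^\ell$ to the sharper Stirling inequality $\ell!\ge \sqrt{2\pi \ell}\,(\ell/e)^\ell$, which inserts an extra factor $(2\pi\ell)^{-(2m-1)/2}$ into $\E Z_\ell$ and drives the tail to $o(1)$ since $k\to\infty$ with $n$.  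The main technical obstacle is bookkeeping this correction carefully so that the inequality $\E(L_{n,m})\le k$ holds for every $n$ rather than only asymptotically; a few smallest values of $n$ may need to be checked by direct inspection to cover any shortfall from the Stirling improvement.
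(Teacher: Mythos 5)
Your proposal is correct and follows essentially the same route as the paper: the first-moment bound $\E Z_\ell\le\binom{n}{\ell}^m/(\ell!)^{m-1}$ is exactly the paper's union bound over index-set tuples, and the paper likewise uses the Stirling refinement $\ell!\ge\sqrt{2\pi\ell}\,(\ell/e)^\ell$ to make the tail $\sum_{\ell\ge\lceil en^{m/(2m-1)}\rceil}(2\pi\ell)^{-(m-1/2)}$ strictly less than $1$ for $m\ge2$ (in fact it is bounded by $(2\pi)^{-(m-1/2)}\sum_{\ell\ge1}\ell^{-(m-1/2)}<1$ uniformly in $n$, so no separate check of small $n$ is needed).
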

\begin{proof}
First, we provide an upper bound on $\p(L_{n,m}\ge k)$, when $k>en^\frac{m}{2m-1}$. Let $S_1,\dots,S_m$ be subsets of $[n]$, each of size $k$. These define $m$ subsequences of $\pi_1,\dots,\pi_m$, where $S_i$ corresponds to the indices of the subsequence $\pi_i$. Since the $\pi_i$ are independent, the orderings of the subsequences will also be independent, and as each subsequence has $k!$ possible equally likely orderings, the probability that the subsequences will be order isomorphic is $1/(k!)^{m-1}$. Furthermore, $L_{n,m}\ge k$ if and only if these subsequences are order isomorphic for at least one of the  $\binom{n}{k}^m$ choices for the list $S_1,\dots,S_m$, so
\[
\p(L_{n,m}\ge k) \le \binom{n}k^m\frac1{k!^{m-1}} = \left(\frac{n^k}{k!}\right)^m\frac1{k!^{m-1}} = 
\frac{n^{mk}}{k!^{2m-1}} ,
\]
Using the bound $k!>\sqrt{2\pi k}\(\frac{k}e\)^k$, this implies
\[
\p(L_{n,m}\ge k)\le \frac1{(2\pi k)^{(m-\frac12)}}\left(\frac{e^{2m-1}n^m}{k^{2m-1}}\right)^{k},
\]
and thus when $k>en^\frac{m}{2m-1}$, we have $\p(L_{n,m}\ge k)\le (2\pi k)^{-(m-\frac12)}. $

We can write $\E(L_{n,m})$ as
\begin{align*}
\E(L_{n,m})=\sum_{k=1}^{n} \p(L_{n,m}\ge k)
&=\sum_{k=1}^{\left\lfloor en^\frac{m}{2m-1}\right \rfloor} \p(L_{n,m}\ge k)+\sum_{k=\lceil en^\frac{m}{2m-1}\rceil}^n \p(L_{n,m}\ge k)\\
&\le \lfloor en^\frac{m}{2m-1}\rfloor + \sum_{k=\lceil en^\frac{m}{2m-1}\rceil}^n \frac1{(2\pi k)^{(m-\frac12)}}\\
&\le \lfloor en^\frac{m}{2m-1}\rfloor + \frac1{(2\pi)^{(m-\frac12)}}\sum_{k=1}^\infty \frac1{k^{(m-\frac12)}}\\
\end{align*}
The second term on  the last line is known to have a sum less than 1 for $m \ge 2$, so it follows that $\E(L_{n,m})\le\lceil en^\frac{m}{2m-1}\rceil$.	
\end{proof}

\section{Lower Bound}

The purpose of this section is to prove the following asymptotic lower bound for $\E(L_{n,m})$.
\begin{theorem}
$\liminf_{n\to\infty} \frac{\E(L_{n,m})}{n^{\frac{m}{2m-1}}}\ge \frac12.$
\end{theorem}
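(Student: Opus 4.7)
The plan is to use the second moment method. Fix $\e > 0$, set $k = \lfloor(\tfrac12 - \e) n^{m/(2m-1)}\rfloor$, and let $X$ be the number of $m$-tuples $(S_1, \dots, S_m)$ of $k$-subsets of $[n]$ such that $\pi_1|_{S_1}, \dots, \pi_m|_{S_m}$ are all order-isomorphic. Then $L_{n,m} \ge k$ exactly when $X > 0$, so $\E(L_{n,m}) \ge k\,\p(X > 0)$. It therefore suffices to prove $\p(X > 0) \to 1$ as $n \to \infty$; letting $\e \to 0$ then yields the $\liminf$ bound.

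The first moment is the computation of Theorem 1 in reverse: by the independence of the $\pi_i$ and the uniformity of each induced pattern, $\E(X) = \binom{n}{k}^m / (k!)^{m-1}$. Using Stirling together with the refined estimate $\binom{n}{k} \sim (n^k/k!)\exp(-k^2/(2n))$ (valid since $k = o(n)$), a direct calculation shows that, writing $\alpha = k/n^{m/(2m-1)}$,
\[
\log \E(X) \;=\; (2m-1)\,\alpha\,(1-\log\alpha)\,n^{m/(2m-1)}\,(1+o(1)),
\]
which tends to $+\infty$ since $\alpha = \tfrac12 - \e < e$ (note that $m \ge 2$ is what makes the $-mk^2/(2n)$ correction term of strictly lower order).

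The heart of the proof is the second moment. Independence across the $\pi_i$ reduces the computation to single-permutation correlations:
\[
\E(X^2) \;=\; \sum_{\sigma, \tau \in S_k}\bigl(\E[N_\sigma(\pi)\,N_\tau(\pi)]\bigr)^m,
\]
where $N_\sigma(\pi)$ counts the $k$-subsets $S$ with $\mathrm{pat}(\pi|_S) = \sigma$. I would decompose $\E[N_\sigma N_\tau] = \sum_{S,T}\p(\mathrm{pat}(\pi|_S) = \sigma, \mathrm{pat}(\pi|_T) = \tau)$ by the intersection size $j = |S \cap T|$. The $j = 0$ stratum uses the independence of patterns on disjoint subsets (joint probability exactly $1/(k!)^2$) and, after raising to the $m$-th power and summing over $(\sigma, \tau)$, reproduces $\E(X)^2$.

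The main obstacle is showing the off-diagonal strata $j \ge 1$ contribute only $o(\E(X)^2)$. For each $j$, the number of pairs $(S,T)$ with $|S\cap T| = j$ is $\binom{n}{k}\binom{k}{j}\binom{n-k}{k-j}$, and one would bound the joint probability by conditioning on the pattern of $\pi$ on the $j$ shared positions and counting compatible extensions. Summing over the intersection profiles $(j_1, \dots, j_m)$ and using Stirling to control the factorial ratios, the off-diagonal contribution must be dominated by the $j_i \equiv 0$ stratum. I expect the specific threshold $\alpha = \tfrac12$ (rather than the first-moment value $e$) to emerge from a necessarily lossy Stirling bound in this off-diagonal analysis; making the bound hold uniformly in the $j_i$ while keeping the constant at $\tfrac12$ is the central technical hurdle.
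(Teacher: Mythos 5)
Your route --- a second moment method on the count $X$ of $m$-tuples of $k$-subsets inducing a common pattern --- is entirely different from the paper's, which never touches $\E(X^2)$: the paper generates the permutations from points in $[0,1]^2$, partitions the square into an $r^m\times r^m$ grid, and tracks the largest ``scattering'' of full boxes via a greedy recursion whose normalized expectation dominates the Euler-method iterates for $y'=(1-y)^2$; the constant $\tfrac12$ is just $y(1)=\tfrac12$ for that ODE. Your first-moment computation and the identity $\E(X^2)=\sum_{\sigma,\tau}(\E[N_\sigma N_\tau])^m$ are correct, and the $j=0$ stratum does essentially recover $\E(X)^2$. But the proof has a genuine gap exactly where you flag it: the off-diagonal bound is not a technical hurdle left to the reader, it is the entire content of the argument, and you have not carried it out. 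Concretely, for overlapping $S,T$ the joint probability $\p(\mathrm{pat}(\pi|_S)=\sigma,\ \mathrm{pat}(\pi|_T)=\tau)$ is not a function of $j=|S\cap T|$ alone --- it depends on how the shared elements sit inside $S$ and $T$ and on whether $\sigma$ and $\tau$ are compatible on those positions, and evaluating it amounts to counting linear extensions of the poset induced on $S\cup T$. No bound on this quantity, uniform over $j$ and over the alignment, appears in your sketch, so nothing is actually proved.

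More seriously, there is no reason to expect that, once done, the computation yields the threshold $\tfrac12$. The assertion that ``$\alpha=\tfrac12$ emerges from a necessarily lossy Stirling bound'' is not derived from anything; the paper's $\tfrac12$ comes from a completely unrelated mechanism, and indeed the paper immediately improves it to $\frac{2m-2}{2m-1}(\frac1{2m-2})^{1/(2m-1)}$ by rescaling the grid, so $\tfrac12$ is not even canonical for that method. There is also a structural warning sign: by the concentration result of Section 4 together with Paley--Zygmund, $\E(X^2)/\E(X)^2\to\infty$ for every $\alpha$ above the true constant, even though $\E(X)\to\infty$ there; so the second moment ratio is known to blow up somewhere in $(c^*,e)$, and whether it stays bounded for any positive $\alpha$ --- let alone all $\alpha<\tfrac12$ --- is precisely the hard question. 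The closely related LIS and LCS problems are cautionary examples: in both, overlapping occurrences are correlated strongly enough that a naive second moment does not recover the sharp constant. To salvage your approach you would need either an explicit, uniform estimate on the overlap strata proving $\E(X^2)=(1+o(1))\E(X)^2$ for $\alpha<\tfrac12$, or a restriction of $X$ to a well-separated family of occurrences for which the correlations can be controlled; as written, the theorem is not proved.
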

The proof will require developing some machinery. We first give a method to generate the $m$ random permutations, which will allow us to identify common patterns more easily. 

\begin{proof}
Let $I$ be the interval $[0,1]$. By choosing $n$ points uniformly randomly in the unit square $I^2$, we can specify a permutation $\pi\in S_n$ uniformly at random as follows. Consider the point with the $i\th$ smallest $x$ coordinate. Assign $\pi(i)=j$ if that point has the $j\th$ smallest $y$ coordinate. For our proof, let $X_{i,j}$, for $1\le i\le m$ and $1\le j\le n$, each be chosen uniformly from $I^2$, and let $\pi_i$ for $1\le i\le m$, be the permutation specified in the above fashion by the points $X_{i,1},\dots,X_{i,n}$.

Furthermore, let $r=\lfloor n^\frac1{2m-1}\rfloor$. We can parition $I^2$ into a $r^m$ by $r^m$ array of $r^{2m}$ equally sized square boxes. Call a box \emph{full} if, for each $1\le i\le m$, it contains at least one point from the set $\{X_{i,j}:1\le j\le n\}$. In other words, it contains a point used to to define each of the $m$ permutations. Furthermore, define a \emph{scattering} to be a set of full boxes, where each pair are in a different row and column.  Scatterings are related to common patterns among $\pi_1,\dots,\pi_m$ as follows: if there is a scattering of size $k$, there will be a common pattern among $\pi_1,\dots,\pi_m$ of length $k$. This can be seen by examining the $m$ subsequences defined by the points in these full boxes. In this proof, we find a probabilistic lower bound for the number of full boxes, and use this to find a lower bound for the expected size of the largest scattering.

We need one last tool. Let $\rho$ be a random ordering of the $r^{2m}$ boxes, so that $\rho$ is distributed uniformly over the $(r^{2m})!$ bijections from $\{1,2,\dots,r^{2m}\}$to  the set of $r^{2m}$ boxes. Ordering the boxes randomly (as opposed to some arbitrary, deterministic ordering) will simplify parts of the proof later, which will consider the boxes in the order defined by $\rho$. Finally, let $F_i$ be the event that $\rho(i)$ is full, and let $F$ be the total number of full boxes.

\begin{lemma}
For all $\e>0$, $\p(F<(1-\e)r^m)\to0$ as $n,r\to\infty$.
\end{lemma}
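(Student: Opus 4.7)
My plan is to apply the second moment method (Chebyshev's inequality) to $F = \sum_{k=1}^{r^{2m}} \mathbf{1}_{F_k}$, where I let $F_k$ denote the event that the $k\th$ box is full (up to the random permutation $\rho$ this matches the paper's notation, and the count $F$ is the same either way).  I will verify that (i) $\E(F) \sim r^m$ and (ii) the indicators $\mathbf{1}_{F_k}$ are pairwise negatively correlated; together these give $\Var(F) \le \E(F) \sim r^m$, and Chebyshev then delivers the desired concentration.

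For (i), set $s = 1/r^{2m}$ and let $A_i^k$ denote the event that at least one of $X_{i,1},\dots,X_{i,n}$ lies in box $k$.  Independence of the $m$ permutations gives $\p(F_k) = \prod_{i=1}^m \p(A_i^k) = [1-(1-s)^n]^m$.  Since $r = \lfloor n^{1/(2m-1)}\rfloor$ forces $n/r^{2m-1}\to 1$, we have $ns = 1/r + O(1/r^2)$, so $(1-s)^n = e^{-1/r}(1+o(1))$, $1-(1-s)^n \sim 1/r$, and therefore $\E(F) = r^{2m}\p(F_k) \sim r^m$.

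For (ii), independence across permutations again gives $\p(F_k \cap F_l) = \prod_{i=1}^m \p(A_i^k \cap A_i^l)$ for $k \ne l$.  Inclusion-exclusion yields $\p(A_i^k \cap A_i^l) = 1 - 2(1-s)^n + (1-2s)^n$, while $\p(A_i^k)^2 = 1 - 2(1-s)^n + (1-s)^{2n}$.  The identity $(1-s)^2 = 1 - 2s + s^2 > 1 - 2s$ implies $(1-s)^{2n} > (1-2s)^n$, so $\p(A_i^k \cap A_i^l) < \p(A_i^k)^2$; taking the product over $i$ preserves the inequality, so $\Cov(\mathbf{1}_{F_k},\mathbf{1}_{F_l}) < 0$ for all $k \ne l$.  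Hence $\Var(F) \le \sum_k \p(F_k)(1-\p(F_k)) \le \E(F) \sim r^m$.

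Chebyshev then closes the argument: given any $\e > 0$, once $n$ is large enough that $\E(F) \ge (1-\e/2)r^m$,
\[
\p(F < (1-\e)r^m) \le \p(|F-\E(F)| > (\e/2)r^m) \le \frac{\Var(F)}{(\e/2)^2 r^{2m}} = O(r^{-m}) \to 0.
\]
The only step requiring real care is the pairwise negative correlation in (ii); after that, the mean computation and Chebyshev application amount to standard asymptotic bookkeeping.
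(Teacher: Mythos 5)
Your proof is correct and follows essentially the same route as the paper's: compute $\E(F)\sim r^m$, bound $\Var(F)$ by $\E(F)$ using pairwise negative correlation of the indicator variables, and finish with Chebyshev's inequality. The only difference is that you verify the negative correlation explicitly via inclusion--exclusion and the inequality $(1-s)^{2n}>(1-2s)^n$, whereas the paper asserts it with an informal conditioning argument; this is a welcome tightening but not a different approach.
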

\begin{proof}

Since $n\ge r^{2m-1}$, we have

\begin{align*}
\p(F_{i}) &= \(1-\(\frac{r^{2m}-1}{r^{2m}}\)^{n}\)^m\\
&\ge \(1-\(1-\frac{1}{r^{2m}}\)^{r^{2m-1}}\)^m\\
&\ge \(1-e^{-1/r}\)^m\\
&\ge \(\frac1r-\frac1{2r^2}\)^m\\
&\ge\frac1{r^m}\(1-\frac{m}{2r}\).
\end{align*}

Then, by linearity of expectation, we have $\E(F)\ge\(1-\frac{m}{2r}\)r^m$. Also, the inequalities $e^{-x}\ge1-x\ge e^{-x/(1-x)}$ show that
\begin{align*}
\p(F_{i}) &= \(1-\(\frac{r^{2m}-1}{r^{2m}}\)^{n}\)^m\\
&\le \(1-e^{-n/(r^{2m}-1)}\)^m\\
&\le \(\frac{n}{r^{2m}-1}\)^m\\
&\le\frac1{r^m}(1+o(1)).
\end{align*}
We now give a bound for $\Var(F)$. Notice that the indicator variables $1_{F_i}$ for $F_i$ are pairwise negatively correlated; given that $F_i$ has occurred, it is less likely that $F_j$ will occur (since there will be strictly fewer points that can land in the $j\th$ box).  Thus
\[
\Var(F)<
\sum_{i=1}^{r^{2m}}\Var(1_{F_i})
<\sum_{i=1}^{r^{2m}}\p(F_i)
< r^m(1+o(1)).
\]
Then, for any $\e>0$, we have that
\begin{align*}
\p(F<(1-\e)r^m)&=\p\(F<\(1-\frac{m}{2r}\)r^m-\(\e-\frac{m}{2r}\)r^m\)\\
&\le \p\(F<\E(F)-\(\e-\frac{m}{2r}\)r^m\)\\
&\le \p\(|F-\E(F)|>\(\e-\frac{m}{2r}\)r^m\).
\end{align*}
If we choose $n$ sufficiently large so $\frac{m}{2r}<\e$, then by Chebychev's inequality, we have
\[
\p(F<(1-\e)r^m)\le \frac{\Var(F)}{\(\e-\frac{m}{2r}\)^2r^{2m}}\le \frac{1+o(1)}{\(\e-\frac{m}{2r}\)^2r^{m}}\to0
\]
as $r\to\infty$.
\end{proof}

Given that there are $F$ full boxes, index them with the numbers $1$ through $F$ in the same order as $\rho$, and let $B_k$ refer to the $k\th$ full box. The fact that $\rho$ was a random ordering ensures that, given $\{B_1,\dots,B_{k-1}\}$, $B_k$ is distributed uniformly among the $r^{2m}-k+1$ locations not occupied by $\{B_1,\dots,B_{k-1}\}$. Define the sequence of random variables $\{S_k\}_0^{F}$, where $S_0=0$ and $S_k$ is the size of the largest scattering which is a subset of $\{B_1,\dots, B_{k}\}$. Then $S_1=1$, and $S_{k+1}$ is equal to either $S_k$ or $S_k+1$.

Let $\e>0$ be given. Throughout the rest of this proof, we will use the expression $S_x$ to mean $S_{\lfloor x\rfloor}$. The next lemma formalizes the previous observation that given a size $k$ scattering, there will be a common pattern of length $k$.

\begin{lemma}
For large enough $n$, $\E(L_{n,m})\ge \E(S_{(1-\e)r^m})(1-o(1)).$ 
\end{lemma}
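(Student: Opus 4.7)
The plan is to condition on the high-probability event $\{F \ge (1-\e)r^m\}$ supplied by the previous lemma, and couple it with the pathwise correspondence between scatterings and common patterns noted just before the previous lemma.

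The first step is to establish the pathwise inequality $L_{n,m} \ge S_{(1-\e)r^m}$ on the event $\{F \ge (1-\e)r^m\}$. For any $k \le F$, a maximum-size scattering inside $\{B_1,\dots,B_k\}$ consists of $S_k$ full boxes lying in pairwise distinct rows and columns. Because each of these boxes is full, for every $1\le i\le m$ we can pick one point $X_{i,j}$ from each box, giving a subsequence of $\pi_i$ of length $S_k$ whose relative $x$-order is determined by the column indices of the chosen boxes and whose relative $y$-order is determined by their row indices. Since the rows and columns of the scattering are the same for every $i$, the resulting $m$ subsequences are mutually order-isomorphic, hence form a common pattern of length $S_k$. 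Setting $k=\lfloor(1-\e)r^m\rfloor$ yields the claim on the high-probability event.

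The second step is a short expectation manipulation. I would extend the definition $S_k := S_F$ for $k > F$, so that $S_{(1-\e)r^m}$ is well-defined everywhere and still satisfies $S_{(1-\e)r^m}\le (1-\e)r^m$ deterministically. Splitting on $\{F\ge(1-\e)r^m\}$ and its complement,
\[
\E(S_{(1-\e)r^m}) \le \E\bigl(L_{n,m}\,1_{F\ge(1-\e)r^m}\bigr) + (1-\e)r^m\,\p\bigl(F<(1-\e)r^m\bigr).
\]
The first term is bounded by $\E(L_{n,m})$, and by the previous lemma the second term is $o(r^m)$, giving the additive estimate
\[
\E(L_{n,m}) \ge \E(S_{(1-\e)r^m}) - o(r^m).
\]

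The final step is to convert this additive bound into the multiplicative form in the lemma. This is clean provided $\E(S_{(1-\e)r^m})=\Omega(r^m)$, which will be established in the subsequent steps of the proof of the theorem (where one shows that a positive fraction of the first $(1-\e)r^m$ full boxes can always be incorporated into a scattering). Given such a lower bound, the error term $o(r^m)$ is $o(1)\cdot\E(S_{(1-\e)r^m})$ and the lemma follows. The mildly delicate point I expect to be the main obstacle is purely bookkeeping: one must be careful to extend $S_k$ past $k=F$ in a harmless way and to pin down that the error coming from the rare event $\{F<(1-\e)r^m\}$ is indeed of smaller order than the quantity $\E(S_{(1-\e)r^m})$ to which we wish to compare $\E(L_{n,m})$.
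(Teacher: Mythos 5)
Your proposal is correct and follows essentially the same route as the paper: the pathwise inequality $L_{n,m}\ge S_{(1-\e)r^m}$ on the event $\{F\ge(1-\e)r^m\}$ via the scattering-to-pattern correspondence, combined with Lemma 1. The only difference is bookkeeping: the paper conditions on the good event and obtains the multiplicative factor directly from $\p(F>(1-\e)r^m)\ge 1-C/r^m$ (extracted from the proof of Lemma 1), whereas your additive split requires the forward reference $\E(S_{(1-\e)r^m})=\Omega(r^m)$ from the later lemmas to recover the $(1-o(1))$ form --- logically sound, since those lemmas do not depend on this one, but avoidable.
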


\begin{proof}
For ease of reading, let $r^m=R$. By conditioning $L_{n,m}$ on the event $F>(1-\e)R$, we have
\[
\E(L_{n,m})\ge \E(L_{n,m}|F>(1-\e)R)\cdot \p(F>(1-\e)R).
\]
 Given $F>(1-\e)R$, the variable $S_{(1-\e)R}$ is well defined. Suppose that $S_{(1-\e)R}=k$, so that there exists a scattering of size $k$. The centers of these $k$ boxes define a permutation $\sigma\in S_k$, as described in the beginning of this section. For any $i\in 1,\dots,m$, since the boxes in the scattering are full, there will be a subsequence $\pi_i(j_1),\dots,\pi_i(j_k)$, where the points corresponding to each entry will be in different boxes in the scattering. This implies the subsequence is order isomorphic to $\sigma$. Thus, $\sigma$ is a common pattern among $\pi_1,\dots,\pi_m$ of length $k$, implying $L_{n,m}\ge S_{(1-\e)R}$.  Combining this with the proof of Lemma 1, which guarantees $P(F>(1-\e)R)\ge1-\frac{C}{r^m}$ for some constant $C$ and large $n$, we get that 
\[\E(L_{n,m})\ge \E(L_{n,m}|F>(1-\e)R)\cdot \p(F>(1-\e)R)\ge\E(S_{(1-\e)R})\(1-\frac{C}{r^m}\),\] as asserted.
\end{proof}

For the rest of the proof, we will assume $F>(1-\e)R$, so that $S_{(1-\e)R}$ is well defined. The next lemma provides a lower bound for $\E(S_k)$ in terms of another sequence.

\begin{lemma}
For $R=r^m$, define the sequence $\{y_k\}_{k=0}^R$, where $y_0=0$, and
\begin{equation}\label{yrec}
y_{k+1}=y_k+\frac1R(1-y_k)^2
\end{equation}
Then, for all $0\le k\le (1-\e)R$,
$$\frac{\E(S_k)}{R}\ge y_k.$$ 
\end{lemma}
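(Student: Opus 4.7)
The plan is to prove the lemma by induction on $k$. The base case $k=0$ is immediate since $S_0 = 0 = R y_0$. For the inductive step, assuming $\E(S_k) \ge R y_k$, I would first establish a per-step lower bound $\E(S_{k+1} - S_k \mid B_1, \ldots, B_k) \ge (R - S_k)^2/R^2$, then take total expectations and apply Jensen's inequality, and finally propagate the bound using monotonicity of the recursion that defines $\{y_k\}$.

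The heart of the argument is the following combinatorial observation. Fix any maximum scattering $T \subseteq \{B_1, \ldots, B_k\}$, so that $|T| = S_k$, and let $R_T$ and $C_T$ denote the $S_k$ rows and $S_k$ columns that $T$ occupies. Then none of $B_1, \ldots, B_k$ can lie in a box whose row is outside $R_T$ and whose column is outside $C_T$, for if some $B_j$ did, then $T \cup \{B_j\}$ would be a scattering of size $S_k + 1$, contradicting the maximality of $T$. Consequently the entire ``new'' region of $(R - S_k)^2$ locations (rows not in $R_T$, columns not in $C_T$) is disjoint from $\{B_1, \ldots, B_k\}$. Since $B_{k+1}$, conditioned on the earlier positions, is uniform over the $R^2 - k$ remaining box locations, the probability it lands in the new region is at least $(R - S_k)^2/R^2$; whenever it does, $T \cup \{B_{k+1}\}$ is a scattering of size $S_k + 1$, so $S_{k+1} \ge S_k + 1$, which is the claimed per-step bound.

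Taking expectations and applying Jensen's inequality to the convex function $x \mapsto (R - x)^2$ yields $\E(S_{k+1}) \ge \E(S_k) + (R - \E(S_k))^2/R^2$. Writing $z_k = \E(S_k)/R$, this reads $z_{k+1} \ge f(z_k)$, where $f(y) = y + (1 - y)^2/R$ is precisely the map driving the $y_k$-recurrence. A quick derivative check shows $f'(y) = 1 - 2(1-y)/R \ge 0$ on $[0, 1]$ as long as $R \ge 2$, and both $y_k$ and $z_k$ stay in $[0, 1]$, so $f$ is nondecreasing where we need it. The inductive hypothesis $z_k \ge y_k$ then gives $z_{k+1} \ge f(z_k) \ge f(y_k) = y_{k+1}$, closing the induction.

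I expect the only real subtlety to be the disjointness observation used in the second paragraph; without it, a naive count of new locations would have to subtract up to $k$ already-occupied boxes and give only $((R - S_k)^2 - k)/R^2$, a bound that need not preserve the recursion once $k$ is comparable to $(1-\e)R$. The remaining ingredients, namely the Jensen step and the derivative check that lets one propagate the inequality through the nonlinear recurrence, are routine.
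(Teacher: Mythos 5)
Your proof is correct and follows essentially the same route as the paper's: a per-step lower bound of $(R-S_k)^2/R^2$ on the probability of extending a maximum scattering, followed by Jensen's inequality and the monotonicity of $y\mapsto y+\frac1R(1-y)^2$ to push the bound through the induction. Your explicit disjointness observation (that no $B_j$ with $j\le k$ can occupy the ``new'' region, by maximality of $T$) is a worthwhile addition, as it justifies the value $\frac{(R-s)^2}{R^2-k}$ that the paper asserts without comment.
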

\begin{proof}
Given $S_k=s$, there is scattering, $T$, where $|T|=s$. Notice that $B_{k+1}$ can be appended to $T$ to make a larger scattering if it is in one of the $(R-s)^2$ locations not sharing a row or column with any box in $T$, in which case there will exist a scattering of length $k+1$. This occurs with probability $\frac{(R-s)^2}{R^2-k}$, so that
\begin{eqnarray*}
\E(S_{k+1})&=&\E(S_k)+\E(S_{k+1}-S_k)\\
&=&\E(S_k)+\p(S_{k+1}-S_k=1)\\ 
&=&\E(S_k)+\sum_s\p(S_k=s)\frac{(R-s)^2}{R^2-k}\\
&\ge&\E(S_k)+\sum_s\p(S_k=s)\(1-\frac{s}{R}\)^2\\
&=&\E(S_k)+\E\(\(1-\frac{S_k}{R}\)^2\)\\
&\ge&\E(S_k)+\(\E\(1-\frac{S_k}{R}\)\)^2,
\end{eqnarray*}
and thus
\begin{equation}\label{Erec}
\frac{\E(S_{k+1})}R \ge \frac{\E(S_k)}R+\frac1R\(1-\frac{\E(S_k)}R\)^2.
\end{equation}
We now use induction to complete the proof.  Evidently $S_0=y_0=0$ and ${\E(S_1)}/{R}={1}/{R}= y_1$.  Assume that $\E(S_k)/R\ge y_k$.  Then, we note that the right hand side $f(x)$ of (2) is an increasing the function of the argument $x:=\E(S_k)/R$ since $f'(x)=1-\frac2R(1-x)>0$ if $R\ge2$.  It follows from the induction hypothesis that $$\frac{\E(S_{k+1})}{R}\ge y_k+\frac1R(1-y_k)^2=y_{k+1}.$$
\end{proof}

\begin{lemma}
$\lim_{R\to\infty} y_{\lfloor(1-\e)R\rfloor}=\frac{1-\e}{2-\e}.$
\end{lemma}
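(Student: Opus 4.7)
The recursion $y_{k+1} - y_k = \frac{1}{R}(1-y_k)^2$ is an Euler discretization with step size $1/R$ of the ODE $y'(t)=(1-y)^2$ with $y(0)=0$, whose exact solution $y(t)=t/(1+t)$ equals $(1-\e)/(2-\e)$ at $t=1-\e$, matching the target. My plan is to prove the discrete version rigorously via the linearizing substitution $\phi_k := 1/(1-y_k)$, under which the nonlinear recursion becomes one that telescopes up to an error of order $1/R$.

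I will first check by induction that $0 \le y_k < 1$ for all $k$, so that $\phi_k \ge 1$ is well-defined: if $y_k < 1$, then $y_{k+1} = y_k + (1-y_k)^2/R < y_k + (1-y_k) = 1$ whenever $R \ge 1$. Next, from the factorization $1 - y_{k+1} = (1-y_k)\bigl(1 - (1-y_k)/R\bigr)$, taking reciprocals and simplifying yields the clean recursion
\[
\phi_{k+1} - \phi_k \;=\; \frac{1}{R - 1/\phi_k}.
\]

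Because $\phi_k \ge 1$, the increment on the right lies in the interval $[1/R,\,1/(R-1)]$, and telescoping from $\phi_0=1$ produces the two-sided sandwich
\[
1 + \frac{k}{R} \;\le\; \phi_k \;\le\; 1 + \frac{k}{R-1}.
\]
Specializing to $k = \lfloor(1-\e)R\rfloor$ and letting $R \to \infty$, both sides converge to $2-\e$, so $\phi_k \to 2-\e$, and hence
\[
y_k \;=\; 1 - \frac{1}{\phi_k} \;\longrightarrow\; 1 - \frac{1}{2-\e} \;=\; \frac{1-\e}{2-\e},
\]
as claimed.

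The only nontrivial step is spotting the substitution $\phi_k = 1/(1-y_k)$; once that is in hand the argument reduces to a telescoping sum and elementary arithmetic. No probabilistic input is needed, since this is a purely deterministic statement about the sequence $\{y_k\}$.
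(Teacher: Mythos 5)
Your proof is correct, and it takes a genuinely different and more self-contained route than the paper. The paper observes that $\{y_k\}$ is the Euler discretization of $y'=(1-y)^2$, $y(0)=0$, and then cites general convergence theorems for Euler's method (Greenspan, Theorems 1.1 and 1.2), which forces it to spend effort justifying that the Lipschitz hypothesis $|\partial F/\partial y|<M$ only needs to hold on the range $[0,1]$ actually visited by $y_k$ and $y(x)$. You instead exploit the special structure of this particular recursion: the factorization $1-y_{k+1}=(1-y_k)\bigl(1-(1-y_k)/R\bigr)$ linearizes under $\phi_k=1/(1-y_k)$ (the discrete analogue of the substitution that solves the separable ODE exactly), giving the telescoping increment $\phi_{k+1}-\phi_k=1/(R-1/\phi_k)$ and the explicit sandwich $1+k/R\le\phi_k\le1+k/(R-1)$. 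All steps check out: the induction $0\le y_k<1$, the algebra of the reciprocal recursion, and the passage to the limit at $k=\lfloor(1-\e)R\rfloor$. (The only microscopic quibble is that $y_{k+1}<1$ needs $R>1-y_k$, so $R\ge2$ rather than $R\ge1$ covers the case $y_k=0$; this is irrelevant as $R\to\infty$.) What your approach buys is a completely elementary, citation-free argument with explicit quantitative error bounds of order $1/R$; what it costs is generality --- it works because this recursion happens to linearize, whereas the paper's Euler-method argument would survive replacing $(1-y)^2$ by any locally Lipschitz right-hand side.
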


\begin{proof} The sequence $y_k$ is (coincidentally) the result of applying Euler's method to approximate the solution to the differential equation $y'(x)=(1-y)^2$, with initial condition $y(0)=0$, using step size $1/R$. This has a unique solution on the interval $(0,1)$, given by $y(x)=\frac{x}{x+1}$. 

To prove this Lemma, we cite Theorems 1.1 and 1.2 of \cite{greenspan}, which proves that the error terms for Euler's method converge uniformly to zero. The only difficulty is that this proof assumes that the DE is of the form $y'=F(x,y)$, with $\frac{\partial F}{\partial y}$ being bounded for all $y\in\mathbb{R}$. In our case, $\frac{\partial}{\partial y}(1-y)^2$ is not bounded. However, a careful examination of the proof shows that, if $y_k,y(x)\in[a, b]$ for all $k$ and $x\in[0,1]$, it is only required that $|\frac{\partial F}{\partial y}|<M$ for $y\in [a,b]$. Clearly $y(x)=\frac{x}{x+1}\in[0,1]$ for $x\in[0,1]$, and it can be shown by induction that $y_k\in[0,1]$ for $0\le k\le R$. Thus, since $\frac{\partial}{\partial y}(1-y)^2$ is bounded on $[0,1]$, the proof still applies.

In this case, the $k\th$ error term is $|y_k-y(k/r)|$, so that 
$$
\lim_{R\to\infty} \,\,\,y_{\lfloor(1-\e)R\rfloor}-y\(\frac{\lfloor(1-\e)R\rfloor}{R}\)=0.
$$
Since $y(1-\e-\frac1R)\le y\(\frac{\lfloor(1-\e)R\rfloor}{R}\)\le y(1-\e)$, this proves that $\lim_{R\to\infty}y_{\lfloor(1-\e)R\rfloor}= y(1-\e)=\frac{1-\e}{2-\e}$.\hfill\end{proof}

Finally, combining Lemmas 2, 3 and 4, we get
\[
\liminf_{n\to\infty} \frac{E(L_{n,m})}{R}\ge\liminf_{R\to \infty} \frac{E(S_{(1-\e)R})}R \ge \lim_{R\to \infty}y_{\floor{(1-\e)R}}=\frac{1-\e}{2-\e}.
\]
Since this holds for all $\e>0$, this implies $\liminf_{n\to\infty} \frac{E(L_{n,m})}{R}\ge\frac12$. Since $
\lim_{n\to\infty}{n^\frac{m}{2m-1}}/{R}=1$ (recall that $R=\floor{n^\frac{1}{2m-1}}^m$), we finally have that $\liminf_{n\to\infty}{E(L_{n,m})}/{n^\frac{m}{2m-1}}\ge\frac12$.
\end{proof}

This lower bound can actually be improved by adjusting the preceding proof slightly. At the beginning of the proof, we divided $I^2$ into a $r^m$ by $r^m$ grid of smaller squares; if we instead use a $c_mr^m$ by $c_mr^m$ grid, for some constant $c_m$, then we obtain the lower bound 
$$
\liminf_{n\to\infty}\frac{E(L_{n,m} )}{n^{m/2m-1}}\ge\frac{c_m}{1+(c_m)^{2m-1}}.
$$
In particular, letting $c_m=\(\frac1{2m-2}\)^\frac1{2m-1}$ shows 
$$
\liminf_{n\to\infty}\frac{E(L_{n,m} )}{n^{m/2m-1}}\ge\frac{2m-2}{2m-1}\(\frac1{2m-2}\)^\frac1{2m-1},
$$
which equals 0.529 for $m=2$, and converges to 1 as $m\to\infty$.  In addition, we have conducted analyses that reveal the following promising methods for improvements:  (i) Poisson approximation \cite{bhj}; (ii) coding the problem using large alphabet results \cite{klm}; (iii) exploiting the possibility of multiple matchings within cells; and (iv) exploiting the theory of perfect matchings in random bipartite graphs \cite{frieze}.

\section{Concentration Around the Mean}  In this section, we use Talagrand's inequality as in \cite{as} to show that $L_{n,m}$ is concentrated in an interval of length $O(n^{m/(4m-2)})$ around $\E(L_{n,m})$:  For each $i\in[m]$, define a sequence $\{X_{i,j}\}_{j=1}^n$ of independent and identically distributed random variables uniformly distributed on $[0,1]$; the order statistics of each sequence will model the a random permutation in $S_n$.  It is evident that the quantity $L_{n,m}$ is 1-Lipschitz, in the sense that altering one of the $mn$ random variables can change $L_{n,m}$ by at most one.  Also, the event $\{L_{n,m}\ge b\}$ can be ``certified" by the values of $bm$ of the random variables.  It follows by Theorem 7.7.1 in \cite{as} that for each $b,t$,
\begin{equation}\p(L_{n,m}\le b-t{\sqrt{mb}})\p(L_{n,m}\ge b)\le\exp\{-t^2/4\}.\end{equation}
Setting  $b={\rm Med}(L_{n,m})$ in (3) yields, for any $t\to\infty$,
\[\p(L_{n,m}\le {\rm Med}(L_{n,m}n)-t{\sqrt{{m\cdot\rm Med}(L_{n,m})}})\to0,\]
and the same inequality, with $b-t{\sqrt{mb}}={\rm Med}(L_{n,m})$ gives
\[\p(L_{n,m}\ge {\rm Med}(L_{n,m})+t{\sqrt{m^2t^2+m\cdot{\rm Med}(L_{n,m})}})\to0,\]
which together imply a concentration in an interval of width ${\sqrt{{m\cdot \rm Med}(L_{n,m})}}$ around ${\rm Med}(L_{n,m})$.  The proof is completed by invoking an inequality such as the one in Fact 10.1 in \cite{mr}, which implies that
\[\vert\E(L_{n,m})-{\rm Med}(L_{n,m})\vert\le40{\sqrt{m\E(L_{n,m})}},\]and noting that $\E(L_{n,m})=\Theta(n^{\frac{m}{2m-1}}).$  \hfill\qed

\section{Open Problems} Several problems come immediately to mind, and most concern finding analogs of classical results on the LCS and LMS problems.  First and foremost, can subadditivity or monotonicity somehow be invoked to show that $$\lim_{n\to\infty} \frac{\E(L_{n,m})}{n^{\frac{m}{2m-1}}}$$ exists, and if so, what is the limiting constant?  Second, what is the ``correct" interval of concentration of $L_{n,m}$ around its mean?  Thirdly, what can be said, \`a la Baik, Deift and Johansson \cite{bdj}, about the appropriately normalized limiting distribution of $L_{n,m}$?  Lastly, is our conjecture (inspired by work in \cite{klm}) that
\[\lim_{m\to\infty}\lim_{n\to\infty}\frac{\E(L_{n,m})}{n^{\frac{m}{2m-1}}}=2\]true?
\section{Acknowledgments}  The research of all three authors was supported by NSF Grant 1004624 and conducted during the Summer 2012 REU program at East Tennessee State University.

\end{document}